\newcommand{\R}{\mathbb{R}}
\newcommand{\es}{\varnothing}
\newcommand{\st}{{\;\vrule height10pt width0.7pt depth3pt\;}}
\newcommand{\CCC}{\mathcal{C}}
\newcommand{\DDD}{\mathcal{D}}
\newcommand{\NNN}{\mathcal{N}}
\journalname{DCG}
\begin{document}
  
\begin{center}
\noindent\textbf{Any Finite Group is the Group of Some Binary, Convex Polytope}

\bigskip

\textsc{Jean-Paul Doignon}\\[2mm]
{Universit\'e Libre de Bruxelles,\\
D\'epartement de Math\'ematique, c.p.~216,\\
1050~Bruxelles, Belgium.\\
\texttt{doignon@ulb.ac.be}}
\end{center}

\medskip

\noindent\textbf{Abstract} 
For any given finite group, Schulte and Williams (2015) establish the existence of a convex polytope whose combinatorial automorphisms form a group isomorphic to the given group.  We provide here a shorter proof for a stronger result: the convex polytope we build for the given finite group is binary, and even combinatorial in the sense of Naddef and Pulleyblank (1981); the diameter of its skeleton is at most 2; any combinatorial automorphism of the polytope is induced by some isometry of the space; any automorphism of the skeleton is a combinatorial automorphism.

\medskip

\noindent\textbf{Keywords} Automorphism, group, graph, convex polytope 

\medskip
 
\noindent\textbf{AMS-classification} 05E18; 52B15 

\thispagestyle{plain}
\markboth{J.-P. DOIGNON}{A Binary Convex Polytope for any Finite Group}
%\markright{Binary, Convex Polytope for a Given, Finite Group}

%%%%%%%%%%%%%%%%%%%%%%%%%%%%%%%%%%%%%%%%%%%%%%%%%%%%%%%%%%%%%%%%%%%%%%
\section{Introduction}  \label{sect_Introduction}
A \textsl{combinatorial automorphism} of a convex polytope is a permutation of its set of vertices that maps the set of vertices of any face to the set of vertices of some face.
Recently, \citet{Schulte_Williams_2015} established that any finite group is isomorphic to the group of combinatorial automorphisms of some convex polytope.  Their proof is rather long and involved.  We propose here a shorther one which even establishes a stronger result because the convex polytope we build for a given group is always \textsl{binary}: all its vertices have coordinates equal to $0$ or $1$.  The polytope is moreover \textsl{combinatorial} in the sense of \cite{Naddef_Pulleyblank_1981}: it is a binary polytope on which every two nonadjacent vertices have their midpoint equal to the midpoint of two other vertices (such combinatorial polytopes appear also, for instance, in \citealp{Matsui_Tamura_1995}).

In all the paper, we consider $\R^d$ as a euclidean vector space with the usual dot product.  The \textsl{isometries} of $\R^d$ are the permutations of $\R^d$ that preserve distance.  For a subset $P$ of $\R^d$, the \textsl{isometries of} $P$ are the isometries of $\R^d$ that stabilize $P$.

\begin{theorem}\label{theorem}\sl
For each finite group $G$ there exist a natural number $d$ and a convex polytope $P_G$ in $\R^d$ satisfying Properties~\textrm{(i)--(v)}:
\begin{enumerate}[\quad\rm(i)]\itemsep0.3em
\item the group of combinatorial automorphisms of the convex polytope $P_G$ is isomorphic to $G$;
\item any combinatorial automorphism of $P_G$ is the restriction to the set of vertices of some isometry of $P_G$;
\item the convex polytope $P_G$ is binary, and even combinatorial;
\item the diameter of the skeleton (graph) of $P_G$ is at most $2$;
\item any automorphism of the skeleton of $P_G$ is a combinatorial automorphism of $P_G$.
\end{enumerate}
\end{theorem}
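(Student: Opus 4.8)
The plan is to reduce the problem to graphs and then attach to a graph a binary polytope whose combinatorics can be read off directly. First I would invoke Frucht's theorem to produce a finite simple graph $\Gamma=(V,E)$ with $\operatorname{Aut}(\Gamma)\cong G$; moreover I would arrange, by adjusting the standard construction, that $\Gamma$ is connected, has minimum degree at least $2$ (so that there are neither isolated vertices nor isolated edges) and is different from $K_3$. These mild extra hypotheses make the action of $\operatorname{Aut}(\Gamma)$ on $E$ faithful and will let me apply Whitney's line-graph theorem later. Identifying $V$ with $\{1,\dots,n\}$ and writing $e_1,\dots,e_n$ for the standard basis of $\R^V$, I would then set
\[
 P_G=\operatorname{conv}\{\,e_i+e_j \;:\; \{i,j\}\in E\,\}\subseteq\R^V,
\]
the edge polytope of $\Gamma$, whose vertices are the $0/1$ vectors of weight two indexed by the edges of $\Gamma$. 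Every $\sigma\in\operatorname{Aut}(\Gamma)$ permutes the coordinates of $\R^V$, hence acts on $P_G$ by a permutation matrix, that is by an isometry of $\R^V$ stabilising $P_G$; this furnishes a homomorphism from $G$ into the combinatorial automorphisms of $P_G$.

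Next I would establish Property~(iii). The polytope is binary by construction, so only the Naddef--Pulleyblank condition needs work. The key computation is the identity
\[
 (e_i+e_j)+(e_k+e_l)=(e_i+e_k)+(e_j+e_l)=(e_i+e_l)+(e_j+e_k),
\]
valid for four distinct indices. From it one checks that two vertices $e_i+e_j$ and $e_k+e_l$ fail to be adjacent on $P_G$ exactly when they are opposite edges of a $4$-cycle of $\Gamma$, i.e.\ when one of the complementary pairs $\{ik,jl\}$, $\{il,jk\}$ also consists of edges; edges sharing a vertex are always adjacent, and disjoint edges spanning no $4$-cycle are adjacent too. Whenever $e_i+e_j$ and $e_k+e_l$ are non-adjacent, the complementary pair supplies two further vertices with the same midpoint, which is precisely the combinatorial property, so (iii) holds.

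Property~(iv) then comes for free from the same analysis: if $e_i+e_j$ and $e_k+e_l$ are non-adjacent they are opposite edges of a $4$-cycle, and either of the two remaining edges of that cycle shares a vertex with each of them and is therefore a common neighbour; hence the skeleton has diameter at most $2$. Property~(ii) will follow once the combinatorial automorphisms have been identified, because the only maps I shall have to realise are the coordinate permutations coming from $\operatorname{Aut}(\Gamma)$, and these are isometries.

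The heart of the proof --- and the step I expect to be the main obstacle --- is to show that there are no further automorphisms: that every combinatorial automorphism, and indeed every automorphism of the skeleton, is induced by an element of $\operatorname{Aut}(\Gamma)$. This yields (i), (ii) and (v) at once, since the three groups (combinatorial automorphisms, skeleton automorphisms, and the coordinate permutations from $\operatorname{Aut}(\Gamma)$) then coincide and are isomorphic to $G$. The difficulty is that the skeleton of $P_G$ is not the line graph $L(\Gamma)$ but strictly contains it, the extra edges being the disjoint pairs that span no $4$-cycle; an abstract automorphism of the skeleton need not respect the distinction between ``sharing a vertex'' and ``disjoint''. I would handle this by choosing $\Gamma$, when applying Frucht's theorem, dense enough that every two disjoint edges are opposite edges of some $4$-cycle, so that the skeleton is \emph{exactly} $L(\Gamma)$; then Whitney's theorem gives $\operatorname{Aut}(L(\Gamma))\cong\operatorname{Aut}(\Gamma)\cong G$, and, since coordinate permutations already realise all of $\operatorname{Aut}(\Gamma)$ as combinatorial automorphisms, the inclusions
\[
 \operatorname{Aut}(\Gamma)\subseteq\operatorname{Aut}_{\mathrm{comb}}(P_G)\subseteq\operatorname{Aut}(L(\Gamma))\cong\operatorname{Aut}(\Gamma)
\]
all become equalities. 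The genuinely delicate point, on which the whole argument rests, is thus to realise an arbitrary finite group as $\operatorname{Aut}(\Gamma)$ while simultaneously forcing this density condition; failing that, one must instead prove directly that the extra edges of the skeleton are canonically recoverable, so that every skeleton automorphism still permutes the vertex-cliques $\{\,e\in E: v\in e\,\}$ and hence descends to a graph automorphism.
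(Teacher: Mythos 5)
There is a genuine gap, and it sits exactly where you place ``the heart of the proof.'' Your main route requires a graph $\Gamma$ with $\operatorname{Aut}(\Gamma)\cong G$ that is simultaneously so dense that every two disjoint edges are opposite edges of a common $4$-cycle. These two requirements are incompatible for general $G$: the density condition forces $\Gamma$ to be complete multipartite. Indeed, suppose $a\sim c$ while $a\not\sim b$ and $b\not\sim c$ for distinct nodes $a,b,c$; by your minimum-degree hypothesis $b$ has a neighbour $d$, and $d\notin\{a,c\}$ since $b$ is adjacent to neither. Then $\{b,d\}$ and $\{a,c\}$ are disjoint edges, and any $4$-cycle having them as opposite edges would need either $\{b,a\}$ or $\{b,c\}$ to be an edge, a contradiction. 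So non-adjacency is transitive, i.e.\ $\Gamma$ is complete multipartite. But the automorphism group of a complete multipartite graph is a product of wreath products $S_{s}\wr S_{m}$; such groups always have even order (any part of size $\ge 2$ or any two interchangeable parts yield an involution), so already $G=C_3$ is unattainable, let alone an arbitrary finite group. Hence the skeleton can never be made equal to $L(\Gamma)$ for the graphs you need, and Whitney's theorem cannot be brought to bear along this route; properties (i), (ii), (v) remain unproved.

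The fallback you sketch (``prove directly that the extra edges of the skeleton are canonically recoverable'') is not an argument, and for the bare edge polytope it is actually false: for the path on four nodes the edge polytope is a triangle, whose six combinatorial automorphisms far exceed the two graph automorphisms, so nothing intrinsic to the face lattice of $\operatorname{conv}\{\chi(e) \;:\; e\in E\}$ distinguishes ``sharing a node'' from ``disjoint and spanning no $4$-cycle.'' The paper resolves precisely this difficulty by enlarging the polytope rather than the edge set of the graph: its $P(\Gamma)$ is the convex hull of $\chi(\varnothing)$, all singletons $\chi(\{v\})$, and all $\chi(e)$. Under a mild condition [*] (achievable for every $G$, after possibly replacing $\Gamma$ by its complement, and by an augmented graph with pendant paths to secure property (v)), every combinatorial automorphism fixes the anchor vertex $\chi(\varnothing)$, hence stabilizes its neighbourhood, which is an induced copy of $\bar\Gamma$; the action on the vertices $\chi(e)$ is then forced, because $\chi(\{v,w\})$ is the unique fourth vertex of the $2$-dimensional face through $\chi(\varnothing)$, $\chi(\{v\})$, $\chi(\{w\})$. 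That anchor vertex, or some substitute for it, is what your construction is missing.
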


We denote by $G$ a given, finite group.  
The required polytope $P_G$ for $G$ arises from a two-step construction.  
In Section~\ref{sect_Graph_from_Group} we recall results on the existence of a graph $\Gamma(G)$  whose automorphism group is isomorphic to $G$.  
In Section~\ref{sect_Polytope_from_Graph} we build for almost any graph $\Gamma$ 
a polytope $P(\Gamma)$ whose combinatorial automorphism group is isomorphic to the automorphism group of the graph $\Gamma$.  It happens that $P(\Gamma)$ is a combinatorial polytope; moreover, any combinatorial automorphism of the polytope $P(\Gamma)$ is the restriction of some isometry of $P(\Gamma)$ (Proposition~\ref{propF}). The proof also shows that all isometries of $P(\Gamma)$ are in fact  coordinate permutations of the space $\R^d$ in which we build $P(\Gamma)$.
Incidentally, we notice that any graph on $d$ nodes is an induced subgraph of the graph of some binary, combinatorial, convex polytope of dimension $d$.

For any given group $G$, the two-step construction produces a convex polytope $P_G=P(\Gamma(G))$ which satisfies Properties~(i)--(iv) in Theorem~\ref{theorem} (as we will see in Section~\ref{sect_Polytope_from_Group}, we might need to replace $\Gamma(G)$ with its complement graph; also we handle the two smallest groups $G$ in another way).
In Section~\ref{sect_Polytope_from_Group}, after completing the proof, we discuss how to decrease the dimension and/or the number of vertices of such a polytope $P_G$.  Next, we indicate how to construct the polytope $P_G$ in order that it moreover satisfies Property~(v).  We also mention a curious, immediate consequence: 
for any finite group $G$, there is a directed graph for which the asymmetric travelling-salesman polytope has its automorphism group  isomorphic to $G$. 

Let us mention in passing that \cite{Babai_1977} characterizes the isometry groups of convex polytopes (binary or not) which are transitive on the set of vertices.  The only excluded groups are the generalized dicyclic groups and the abelian groups of exponent at least 2 (see \citealp{Babai_1977}, or \citealp{Babai_Godsil_1982}, for a definition of generalized dicyclic groups and, for instance, \citealp{Robinson_1996} for further group terminology). 

%%%%%%%%%%%%%%%%%%%%%%%%%%%%%%%%%%%%%%%%%%%%%%%%%%%%%%%%%%%%%%%%%%%%%%
\section{Building a Graph for any Group}
\label{sect_Graph_from_Group}

Our graphs $\Gamma=(V,E)$ have neither loops nor multiple links.  We use the terms `node' and `link' for an element of respectively $V$ and $E$ (while keeping `vertex' and `edge' for polytopes).  Let again $G$ be a finite group.  \cite{Frucht_1939} was the first to show the existence of some graph whose automorphism group is isomorphic to $G$ (for a historical perspective, see \citealp{Hevia_1995}).  For reasons which will become clear in Section~\ref{sect_Polytope_from_Group}, we would like to select, given the group $G$, a graph having $G$ as its automorphism group and moreover having the least possible number of nodes.  

\cite{Babai_1974} proves that, with the exception of the three cyclic groups $C_3$, $C_4$ and $C_5$ (of orders 3, 4 and 5), there exists, for any finite group $G$, a graph having at most $2\,|G|$ nodes with automorphism group $G$.  For many finite groups $G$, there is a graph on $|G|$ nodes or less with the stronger property that its automorphism group is isomorphic to $G$ and acts regularly on the set of nodes (we will not make use of the latter property).  As found by \cite{Godsil_1978} and reported by \cite{Babai_Godsil_1982}, such a graph exists for any finite group $G$ except for 
\begin{enumerate}[\qquad-~]\itemsep0.3em 
\item the abelian groups of exponent at least 3;
\item the generalized dicyclic groups;
\item 13 other groups of orders at most 32.
\end{enumerate}
Now forgetting about regularity, we mention that \cite{Arlinghaus_1985} provides for each finite, abelian group $G$ the minimum number of nodes in a graph having $G$ as its automorphism group.

%%%%%%%%%%%%%%%%%%%%%%%%%%%%%%%%%%%%%%%%%%%%%%%%%%%%%%%%%%%%%%%%%%%%%%
\section{Building a Binary Polytope for any Graph}
\label{sect_Polytope_from_Graph}

For all the section, $\Gamma=(V,E)$ is a graph with at least one node.  For two nodes $u$, $v$ in $V$, we write $u \sim v$ when $u$ and $v$ are linked in $\Gamma$.  For a given node $u$ of $\Gamma$, we denote by $N(u)$ the set of neighbours of $u$ (not including $u$), as well as the graph induced by $\Gamma$ on $N(u)$.
 
We canonically identify $\R^V$ with the euclidean vector space $\R^{|V|}$.  The \textsl{characteristic vector} $\chi(S)$ of a subset $S$ of $V$ is the vector in $\R^V$ defined by
$$
\chi(S)_v \quad=\quad
\begin{cases}
1 & \text{ if } v \in S,\\
0 & \text{ if } v \notin S.
\end{cases}
$$
The \textsl{polytope $P(\Gamma)$ of the graph} $\Gamma=(V,E)$ is the convex hull in $\R^V$ of the characteristic vectors of the empty set, the one-element subsets of $V$ and the links in $E$:
\begin{equation}\label{eqn_P_Gamma}
P(\Gamma) \;=\; \text{conv}\big(\{\chi(\es)\} \cup \{\chi(\{v\}) \st v\in V\} \cup \{\chi(e) \st e\in E\}\big).
\end{equation}
A series of statements now collect properties of the polytope $P(\Gamma)$, starting with a description of its skeleton.

\begin{proposition}\label{prop1}\sl
The vertices of the polytope $P(\Gamma)$ are all the characteristic vectors appearing in Equation~\eqref{eqn_P_Gamma}.  Adjacency among the vertices is as follows:
\begin{enumerate}[\quad\rm(i)]\itemsep0.3em
\item the vertex $\chi(\es)$ is adjacent to each vertex $\chi(\{v\})$, for $v$ in $V$, and to no other vertex;
\item the vertices $\chi(\{u\})$ and $\chi(\{v\})$, for distinct nodes $u$, $v$ in $V$, are adjacent if and only if the nodes $u$ and $v$ are not linked in $\Gamma$ (that is, $u \not\sim v$);
\item two vertices $\chi(\{v\})$ and $\chi(\{u,w\})$, for $v \in V$ and $\{u,w\} \in E$, are adjacent if and only if either $v \in \{u,w\}$ or ($v \notin \{u,w\}$ and neither $v \sim u$ nor $v\sim w$);
\item for distinct links $e$, $f$ in $E$, the vertices $\chi(e)$ and $\chi(f)$ are adjacent if and only if 
either $e$ and $f$ have a common node, or $e$ and $f$ are disjoint and not contained in any four-cycle in the graph $G$.  
\end{enumerate}
\end{proposition}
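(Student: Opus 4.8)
The plan is to first confirm that every characteristic vector listed in Equation~\eqref{eqn_P_Gamma} is indeed a vertex of $P(\Gamma)$, and that there are no others. Since all these vectors are $0/1$-vectors, they are vertices of the unit cube $[0,1]^V$, hence extreme points of any polytope contained in the cube that happens to contain them. Because $P(\Gamma)$ is defined as the convex hull of exactly this finite set, no spurious vertices can appear, and it remains only to check that none of the listed vectors lies in the convex hull of the others. I would argue this by exhibiting, for each such vector $x$, a linear functional maximized uniquely at $x$ over the whole set; for $\chi(\es)$ take $-\sum_v (\cdot)_v$, for $\chi(\{v\})$ or $\chi(e)$ a suitable weighting that rewards the support of $x$ and penalizes any coordinate outside it. This is routine and I would keep it brief.

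\medskip
\textbf{The adjacency criterion: the general principle.}
For the adjacency statements (i)--(iv), the uniform tool is the standard characterization for $0/1$-polytopes: two vertices $x$ and $y$ of $P(\Gamma)$ are adjacent if and only if the segment $[x,y]$ is an edge, equivalently if and only if there is no other pair of vertices $x',y'$ of $P(\Gamma)$ with $x+y = x'+y'$ (i.e.\ the midpoint of $x$ and $y$ is the midpoint of no other pair of vertices), and moreover no vertex other than $x,y$ lies on the segment. In fact, for $0/1$-polytopes the midpoint condition suffices: $x$ and $y$ are nonadjacent precisely when $\tfrac12(x+y)$ can be written as $\tfrac12(x'+y')$ for some pair of vertices $\{x',y'\}\neq\{x,y\}$. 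This is exactly the ``combinatorial'' midpoint property alluded to in the introduction, so establishing the four adjacency rules and the combinatorial property of $P(\Gamma)$ will go hand in hand. My plan is therefore to compute, for each of the four cases, the set of all vertex-pairs sharing a given midpoint and read off the criterion.

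\medskip
\textbf{Case analysis.}
For (i), the midpoint of $\chi(\es)$ and $\chi(\{v\})$ is $\tfrac12\chi(\{v\})$, a vector with a single nonzero coordinate equal to $\tfrac12$; no other pair of $0/1$-vectors from our list can produce it, so $\chi(\es)$ is adjacent to every $\chi(\{v\})$ and, since its only other candidate partners have coordinate sums $\geq 2$, to nothing else. For (ii), the midpoint of $\chi(\{u\})$ and $\chi(\{v\})$ has two coordinates equal to $\tfrac12$; the only competing pair is $\{\chi(\es),\chi(\{u,v\})\}$, which exists as a pair of vertices exactly when $\{u,v\}\in E$, i.e.\ when $u\sim v$. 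Hence nonadjacency corresponds to $u\sim v$, giving the criterion. For (iii), I compare $\chi(\{v\})+\chi(\{u,w\})$ against other sums: when $v\notin\{u,w\}$ the competitors are pairs $\chi(\{u\})+\chi(\{v,w\})$ and $\chi(\{w\})+\chi(\{u,v\})$, which are available precisely when $v\sim w$ or $v\sim u$ respectively, yielding the stated condition; when $v\in\{u,w\}$ the sum has a coordinate equal to $1$ and the pair is forced. For (iv), the midpoint of two disjoint links $e=\{a,b\}$ and $f=\{c,d\}$ has four coordinates equal to $\tfrac12$, and the only alternative vertex-pairs with the same four-element support are the other two perfect matchings $\{\{a,c\},\{b,d\}\}$ and $\{\{a,d\},\{b,c\}\}$; such a pair consists of genuine links exactly when those two pairs lie in $E$, i.e.\ when $a,b,c,d$ span a four-cycle. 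Thus disjoint links are nonadjacent iff they sit in a four-cycle, while links sharing a node have a midpoint (with one coordinate $1$) realized by no other pair and are always adjacent.

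\medskip
\textbf{Main obstacle.}
The step I expect to require the most care is justifying that the midpoint criterion alone decides adjacency for these particular $0/1$-polytopes, and in particular that no \emph{third} vertex of $P(\Gamma)$ ever lies in the relative interior of a segment $[x,y]$ whose midpoint is unique. For arbitrary $0/1$-polytopes one must in principle rule out a vertex lying on the open segment even when the midpoint is not a shared midpoint; here this cannot happen because every vertex has coordinate-sum in $\{0,1,2\}$ and the only way a vertex can be an affine combination of two others with those support sizes is to coincide with their midpoint. I would make this collinearity-exclusion explicit once, as a short lemma about $0/1$-vectors of bounded weight, and then invoke it uniformly across cases (ii)--(iv); I note that in the graph-theoretic wording of (iv) the paper writes $G$ for the graph $\Gamma$.
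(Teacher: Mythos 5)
There is a genuine gap, and it sits at the heart of your argument. The ``general principle'' you invoke --- that for $0/1$-polytopes two vertices $x,y$ are nonadjacent \emph{if and only if} some other pair of vertices has the same midpoint --- is false for general $0/1$-polytopes. Take $P=\operatorname{conv}\{(0,0,0),(1,1,0),(1,0,1),(0,1,1),(1,1,1)\}$ in $\R^3$: all five points are vertices, the vertices $(0,0,0)$ and $(1,1,1)$ are \emph{not} adjacent, because their midpoint $(\tfrac12,\tfrac12,\tfrac12)$ equals $\tfrac13\big[(1,1,0)+(1,0,1)+(0,1,1)\big]$, so any face containing both endpoints contains all five vertices; yet no other \emph{pair} of vertices shares that midpoint (the only $0/1$-complementary pair among the vertices is the pair itself). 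The correct general criterion is that $x,y$ are adjacent iff no convex combination of the \emph{other} vertices --- possibly three or more of them, with unequal weights --- equals $\tfrac12(x+y)$; restricting attention to pairs is exactly the Naddef--Pulleyblank property of being ``combinatorial'', which is a special feature of $P(\Gamma)$, not of $0/1$-polytopes at large. Worse, in this paper that property (Corollary~3(ii)) is \emph{deduced from} Proposition~\ref{prop1}, so assuming it to prove Proposition~\ref{prop1} is circular. Your ``Main obstacle'' paragraph misdiagnoses the difficulty: a third $0/1$-point on an open segment between two $0/1$-points is trivially impossible; the real issue is midpoint representations involving more than two vertices, as in the example above.

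The damage is one-directional but affects every item. Your shared-midpoint computations correctly establish all the \emph{non}adjacency claims (that implication is sound, and it is the same argument the paper uses). But every \emph{adjacency} claim in your case analysis --- including ``links sharing a node are always adjacent'' in (iv) --- is unsupported, since it rests on the false converse. The paper closes exactly this gap by exhibiting, for each claimed adjacency, either an inheritance from the cube (cube edges remain edges of any subpolytope of the cube having the same two vertices) or an explicit valid inequality whose contact set with $P(\Gamma)$ is the segment in question, e.g.\ $x_u+x_v-2\sum_{i\in V\setminus\{u,v\}}x_i\le 1$ for case (ii) and $x_v+\tfrac12x_u+\tfrac12x_w-2\sum_{i\in V\setminus\{u,v,w\}}x_i\le 1$ for case (iii); for case (iv) it observes that the edge polytope is the face of $P(\Gamma)$ cut out by $\sum_{i\in V}x_i\le 2$ and invokes the known adjacency characterization of Tran and Ziegler. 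To repair your proof you would need to supply such certificates (or rule out all multi-vertex convex representations of the relevant midpoints), at which point it essentially becomes the paper's proof.
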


\begin{proof}
The first assertion follows from the fact that characteristic vectors have all their coordinates equal to $0$ or $1$.  We now consider one by one the cases of adjacency. 
\begin{enumerate}[\quad(i)]\itemsep0.3em
\item $\chi(\es)$ is adjacent to each $\chi(\{v\})$ because adjacency of the same vertices hold on the unit cube.  Moreover, $\chi(\es)$ is not adjacent to any vertex $\chi(e)$, with $e\in E$: indeed, the midpoint of $\chi(\es)$ and $\chi(e)$ is the same as the midpoint of two other vertices, namely $\chi(\{u\}$ and $\chi(\{v\})$ if $e=\{u,v\}$;
\item  if $u \sim v$, then the vertices $\chi(\es)$ and $\chi(\{u,v\})$ have the same midpoint as the vertices $\chi(\{u\})$ and $\chi(\{v\})$ do. So $\chi(\{u\})$ and $\chi(\{v\})$ are not adjacent.
Second, if $u \not\sim v$, then the affine inequality on $R^V$
$$
x_u + x_v - 2\, \sum_{i \in V\setminus\{u,v\}} x_i \;\le\; 1
$$
defines a face of $P(\Gamma)$ which is the segment $[\chi(\{u\}),\chi(\{v\})]$ (meaning the inequality is valid for $P(\Gamma)$ and it is satisfied with equality only at the points of $P(\Gamma)$ which belong to the segment).  Thus the vertices $\chi(\es)$ and $\chi(\{u,v\})$ are adjacent;
\item if $v \in \{u,w\}$, adjacency in $P(\Gamma)$ results from adjacency in the unit cube.  If $v \notin \{u,w\}$, notice that if $u \sim v$, then 
$\chi(\{v\})$ and $\chi(\{u,w\})$ have the same midpoint as $\chi(\{w\})$ and $\chi(\{u,v\})$, so $\chi(\{v\})$ and $\chi(\{u,w\})$ are not adjacent.  The argument is similar if $v \sim w$.  Conversely, if $v \notin \{u,w\}$ and $u \not\sim v$, $v \not\sim w$, the affine inequality 
$$
x_v + \frac12 \, x_u + \frac12 \, x_w - 2\, \sum_{i \in V\setminus\{u,v,w\}} x_i  \;\le\; 1
$$
defines a face of $P(\Gamma)$ which is $[\chi(\{v\}),\chi(\{u,w\})]$;
\item this can be proved with arguments similar to those used in (i)--(iii).  For another proof, consider the \textsl{edge polytope} $P_E = \text{conv}\big( \{\chi(e) \st e\in E\}\big)$ of the graph $\Gamma$ (as  for instance in \citealp{Tran_Ziegler_2014}).  Notice that $P_E$ is the face of $P(\Gamma)$ defined by the inequality $\sum_{i \in V} x_i \le 2$.  Thus the two vertices $\chi(e)$ and $\chi(f)$ of $P(\Gamma)$ are adjacent if and only if they are adjacent in $P_E$, and the latter happens exactly if the condition in (iv) is satisfied (see \citealp{Tran_Ziegler_2014}).
\end{enumerate}
\qed\end{proof}

For a vertex $p$ of the polytope $P(\Gamma)$, let $\NNN(p)$ denote both the neighborhood of $p$ in the skeleton, and the graph induced by the skeleton on the neighborhood.

\begin{corollary}\label{cor1}
There hold for the skeleton of the polytope $P(\Gamma)$:
\begin{enumerate}[\qquad\rm(i)]\itemsep0.3em
\item the diameter is at most $2$;
\item the vertices forming the neighborhood $\NNN(\chi(\es))$ of $\chi(\es)$ are all the vertices $\chi(\{u\})$, for $u \in V$;  the mapping 
$$
V \to \NNN(\chi(\es)):\, u \mapsto \chi(\{u\})
$$ 
is an isomorphism from the complement graph $\bar\Gamma$ to the induced graph $\NNN(\chi(\es))$.
\end{enumerate} 
\end{corollary}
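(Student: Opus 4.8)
The plan is to read both parts off the adjacency description in Proposition~\ref{prop1}; indeed the corollary is little more than a convenient reformulation of that proposition, supplemented by a short distance argument for the whole skeleton. I would dispose of part~(ii) first, as it is almost immediate. By Proposition~\ref{prop1}(i) the neighbours of $\chi(\es)$ are exactly the vertices $\chi(\{u\})$ with $u\in V$, so $\NNN(\chi(\es))$ consists precisely of these, and $u\mapsto\chi(\{u\})$ is a bijection from $V$ onto $\NNN(\chi(\es))$. By Proposition~\ref{prop1}(ii), two such vertices $\chi(\{u\})$ and $\chi(\{v\})$ are adjacent in the skeleton exactly when $u\not\sim v$ in $\Gamma$, that is, exactly when $u$ and $v$ are linked in the complement $\bar\Gamma$; hence the bijection carries the links of $\bar\Gamma$ onto the links of the induced graph $\NNN(\chi(\es))$, and is the desired isomorphism.

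For part~(i) I would prove the stronger statement that any two distinct vertices $p$, $q$ of $P(\Gamma)$ are either adjacent or possess a common neighbour, so that their distance is at most~$2$; I would organise this by the types (empty set, singleton, link) of $p$ and $q$, invoking Proposition~\ref{prop1} throughout. The routine cases settle quickly: if one of them is $\chi(\es)$, the other is either a singleton (adjacent, by~(i)) or a link $\chi(\{u,w\})$, and then $\chi(\{u\})$ is a common neighbour, being adjacent to $\chi(\es)$ by~(i) and to $\chi(\{u,w\})$ by~(iii); and two non-adjacent singletons $\chi(\{u\})$, $\chi(\{v\})$ must have $u\sim v$, so they share the neighbour $\chi(\es)$.

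The two remaining cases carry the actual content, and this is where I expect the only real work. If a singleton $\chi(\{v\})$ and a link-vertex $\chi(\{u,w\})$ are not adjacent, then by Proposition~\ref{prop1}(iii) we have $v\notin\{u,w\}$ and, say, $v\sim u$; thus $\{u,v\}$ is a link of $\Gamma$, and $\chi(\{u,v\})$ is adjacent to $\chi(\{v\})$ by~(iii) and to $\chi(\{u,w\})$ by~(iv) (the two distinct links share the node $u$), yielding a common neighbour. If two distinct link-vertices $\chi(e)$ and $\chi(f)$ are not adjacent, then by Proposition~\ref{prop1}(iv) the links $e$, $f$ are disjoint and lie in a common four-cycle; that four-cycle supplies a link $g$ meeting each of $e$ and $f$ in exactly one node, so $\chi(g)$ is adjacent to both $\chi(e)$ and $\chi(f)$ by~(iv). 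The main obstacle is thus isolated to these last two configurations: in each one the very obstruction to direct adjacency (a link $v\sim u$, respectively a four-cycle through $e$ and $f$) is precisely what furnishes the intermediate link-vertex needed to bridge the two vertices in two steps. Collecting the cases shows the diameter is at most~$2$.
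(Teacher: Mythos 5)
Your proof is correct and takes the same route as the paper: the paper's entire proof of this corollary is the single sentence that both assertions follow at once from Proposition~\ref{prop1}, and your case analysis (including the two nontrivial bridging arguments for a singleton versus a link-vertex, and for two disjoint links in a common four-cycle) is exactly the verification the paper leaves implicit.
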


\begin{proof}
Both assertions follow at once from Proposition~\ref{prop1}.
\qed\end{proof}

It follows from Corollary~\ref{cor1}(ii) that any graph $\Gamma$ on $d$ nodes is an induced subgraph of the skeleton of some binary, combinatorial, convex polytope of dimension $d$ (take the polytope $P(\bar\Gamma)$ and consider the graph induced by the skeleton on the neighborhood of $\chi(\es)$).

\begin{corollary}\label{prop2}\sl
The convex polytope $P(\Gamma)$ 
\begin{enumerate}[\qquad\rm(i)]\itemsep0.3em
\item has dimension $|V|$;
\item it is binary, and even combinatorial.
\end{enumerate} 
\end{corollary}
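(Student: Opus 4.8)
The plan is to dispatch the two assertions separately, both resting on the vertex description of Proposition~\ref{prop1}.

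For (i), I would note that among the vertices are the origin $\chi(\es)$ and the $|V|$ unit vectors $\chi(\{v\})$, $v\in V$. These $|V|+1$ points are affinely independent, since the vectors $\chi(\{v\})-\chi(\es)$ form the standard basis of $\R^V$; their affine hull is therefore all of $\R^V$. Hence $P(\Gamma)$ is full-dimensional and $\dim P(\Gamma)=|V|$, the links playing no role in this count.

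For (ii), binarity is immediate from Proposition~\ref{prop1}: the vertices of $P(\Gamma)$ are exactly the characteristic vectors listed in Equation~\eqref{eqn_P_Gamma}, each with coordinates in $\{0,1\}$. To see that $P(\Gamma)$ is moreover combinatorial, I would verify that every pair of nonadjacent vertices shares its midpoint with a second pair of vertices, running through the types of nonadjacent pairs classified in Proposition~\ref{prop1}. The convenient observation is that in each case the two vertices of a pair have disjoint supports, so that their characteristic-vector sum is $\chi$ of the union; one then exhibits a second pair whose supports partition the same set of nodes, whence the sums, and so the midpoints, agree. Concretely: the pair $\chi(\es),\chi(\{u,v\})$ and the pair $\chi(\{u\}),\chi(\{v\})$ both sum to $\chi(\{u,v\})$, which settles simultaneously the case of $\chi(\es)$ versus a link and the case of $\chi(\{u\})$ versus $\chi(\{v\})$ with $u\sim v$; for a nonadjacent pair $\chi(\{v\}),\chi(\{u,w\})$ with $v\notin\{u,w\}$ and, say, $v\sim u$, the pair $\chi(\{w\}),\chi(\{u,v\})$ has the same sum $\chi(\{u,v,w\})$; and for two disjoint links $e,f$ lying in a common four-cycle, writing that cycle cyclically as $a\,b\,c\,d$ with $e=\{a,b\}$ and $f=\{c,d\}$, the remaining two links $\{b,c\},\{d,a\}$ satisfy $\chi(e)+\chi(f)=\chi(\{b,c\})+\chi(\{d,a\})=\chi(\{a,b,c,d\})$.

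Since Proposition~\ref{prop1} classifies every pair of distinct vertices, these computations account for all nonadjacent pairs, and the combinatorial property follows. I expect the only case calling for genuine thought to be the last one, where the witnessing pair must be produced as the two complementary links of the four-cycle; the remaining identifications simply reread the midpoint computations already present in the proof of Proposition~\ref{prop1}.
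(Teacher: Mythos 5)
Your proposal is correct and follows essentially the same route as the paper: dimension via the origin together with the $|V|$ standard basis vectors, binarity from the characteristic-vector description, and the combinatorial property by exhibiting, for each type of nonadjacent pair in Proposition~\ref{prop1}, a second pair with the same midpoint — including the one case the paper also singles out as new, namely two disjoint links whose witnessing pair is the complementary pair of links in the common four-cycle. The only cosmetic difference is that the paper simply cites the midpoint computations already made in the proof of Proposition~\ref{prop1} for the non-link cases, whereas you restate them explicitly.
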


\begin{proof}
Because $\chi(\es)$ is the origin and the vectors $\chi(\{u\})$'s, for $u \in V$, form a base of the vector space $\R^V$, Assertion~(i) holds. Assertion~(ii) was established in the proof of Proposition~\ref{prop1}, except for two nonadjacent vertices of the form $\chi(e)$ and $\chi(f)$ with $e$, $f \in E$.  By (iv) in Proposition~\ref{prop1}, the links $e$ and $f$ are disjoint, and moreover form a four-cycle with two other links $e'$ and $f'$; then $\chi(e)$ and $\chi(f)$ have the same midpoint as $\chi(e')$ and $\chi(f')$ do.
\qed\end{proof}

We now prove that the vertex $\chi(\es)$ is particular among all the vertices of $P(\Gamma)$ exactly when the graph $\Gamma$ satisfies the following mild condition (which we will meet again in later statements):
\begin{quote}
{\rm[*]}~the graph has at least one link ($E\neq\es$) and there does not exist any bipartition of $V$ into two stable sets $C$, $D$ with some node $v_0$ in $C$ linked to all nodes in $D$.
\end{quote}

\begin{lemma}\label{lemma}
The following assertions are equivalent:
\begin{enumerate}[\qquad\rm(a)]\itemsep0.3em
\item all combinatorial automorphisms of the polytope $P(\Gamma)$ fix the vertex $\chi(\es)$;
\item the graph $\Gamma$ satisfies Condition~\textnormal{[*]};
\item the graph $\NNN(p)$ induced on the neighbourhood of a vertex $p$ of $P(\Gamma)$ is isomorphic to the complement graph $\bar\Gamma$ exactly if $p=\chi(\es)$;
\item all automorphisms of the skeleton of $P(\Gamma)$ fix the vertex $\chi(\es)$.
\end{enumerate} 
\end{lemma}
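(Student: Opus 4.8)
The plan is to prove the four assertions equivalent by establishing the cycle $(a)\Rightarrow(b)\Rightarrow(c)\Rightarrow(d)\Rightarrow(a)$, so that the implications $(c)\Rightarrow(d)$ and $(d)\Rightarrow(a)$ come for free while the real content sits in $(a)\Rightarrow(b)$ and $(b)\Rightarrow(c)$. For $(d)\Rightarrow(a)$ I would note that any combinatorial automorphism of $P(\Gamma)$ permutes the $1$-faces and hence restricts to an automorphism of the skeleton, so that if all skeleton automorphisms fix $\chi(\es)$ then so do all combinatorial ones. For $(c)\Rightarrow(d)$ I would use that any skeleton automorphism $\sigma$ carries $\NNN(\chi(\es))$ isomorphically onto $\NNN(\sigma(\chi(\es)))$; since $\NNN(\chi(\es))\cong\bar\Gamma$ by Corollary~\ref{cor1}(ii), assertion~(c) forces $\sigma(\chi(\es))=\chi(\es)$.

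For $(a)\Rightarrow(b)$ I would argue contrapositively: assuming $\Gamma$ violates Condition~[*], I exhibit a combinatorial automorphism moving $\chi(\es)$. If $E=\es$ then $P(\Gamma)$ is the simplex on $\{\chi(\es)\}\cup\{\chi(\{v\})\st v\in V\}$, whose combinatorial automorphism group is the full symmetric group on its vertices, and $\chi(\es)$ is certainly moved. Otherwise there is a bipartition $V=C\sqcup D$ into stable sets with $v_0\in C$ linked to all of $D$; since $C$ is stable this forces $D=N(v_0)$ and $C=V\setminus N(v_0)$. I then define the involution $\phi$ on the vertex set by $\chi(\es)\leftrightarrow\chi(\{v_0\})$ and $\chi(\{w\})\leftrightarrow\chi(\{v_0,w\})$ for $w\in D$, with $\phi$ fixing $\chi(\{u\})$ for $u\in C\setminus\{v_0\}$ and $\chi(e)$ for every link $e$ not incident to $v_0$, and I verify through Proposition~\ref{prop1} and the bipartite structure that $\phi$ sends the vertex set of each face onto the vertex set of a face.

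For $(b)\Rightarrow(c)$ the key is a degree count in the skeleton. A direct reading of Proposition~\ref{prop1} gives, for each node $v$, $\deg\chi(\{v\})=|V|+a_v$, where $a_v$ is the number of links whose two endpoints are distinct from and non-adjacent to $v$; and for each link $e=\{s,t\}$, $\deg\chi(e)=|V|+c_{st}+d_e$, where $c_{st}$ counts the common neighbours of $s$ and $t$ while $d_e$ counts the links disjoint from $e$ lying in no common four-cycle with $e$. Since $\NNN(\chi(\es))\cong\bar\Gamma$ has exactly $|V|$ nodes, a vertex $p\neq\chi(\es)$ can satisfy $\NNN(p)\cong\bar\Gamma$ only in the equality cases $a_v=0$, respectively $c_{st}=d_e=0$. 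In the singleton case $a_v=0$ means $V\setminus(\{v\}\cup N(v))$ is stable; the links at $v$ form a clique inside $\NNN(\chi(\{v\}))$ to which $\chi(\es)$ is non-adjacent, and transporting this configuration through an isomorphism $\NNN(\chi(\{v\}))\cong\bar\Gamma$ produces a node $v_0$ with $N(v_0)$ and $V\setminus N(v_0)$ both stable, that is, a bipartition forbidden by~[*].

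The main obstacle I expect is the link case of $(b)\Rightarrow(c)$: ruling out, under the hypotheses $c_{st}=d_e=0$, that an edge-vertex $\chi(e)$ has $\NNN(\chi(e))\cong\bar\Gamma$. This demands a more delicate version of the clique-transport argument, now driven by the four-cycle criterion of Proposition~\ref{prop1}(iv), to again manufacture a bipartition contradicting~[*]. A secondary subtlety arises in $(a)\Rightarrow(b)$: the automorphism $\phi$ constructed above is not induced by any isometry of $\R^V$ (its natural affine extension fails to be orthogonal), so, in contrast with Proposition~\ref{propF}, one cannot appeal to a symmetry of the ambient cube, and confirming that $\phi$ respects the entire face lattice rather than merely adjacency must be carried out by hand from an explicit description of the faces of $P(\Gamma)$.
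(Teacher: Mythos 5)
Your overall architecture---the cycle (a)$\Rightarrow$(b)$\Rightarrow$(c)$\Rightarrow$(d)$\Rightarrow$(a), the two easy implications, the degree counts $\deg\chi(\{v\})=|V|+a_v$ and $\deg\chi(e)=|V|+c_{st}+d_e$, and the clique transport in the singleton case---is exactly the paper's, but the two places you flag as difficulties are genuine gaps, and in the first one the remedy you sketch is not the right one. In (a)$\Rightarrow$(b) you produce the correct involution $\phi$ of the vertex set but defer the proof that it is a combinatorial automorphism to a by-hand check of the whole face lattice against ``an explicit description of the faces of $P(\Gamma)$''; no such description is available (the paper never computes the faces of $P(\Gamma)$), so as written this step is open. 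The idea you are missing is that $\phi$ extends to an \emph{affine} permutation $T$ of $\R^V$, given by $T(x)_{v_0}=1-\sum_{a\in C}x_a$ and $T(x)_u=x_u$ for $u\neq v_0$: an affine bijection of the ambient space that stabilizes a polytope maps faces onto faces, so once one checks that $T$ permutes the vertices of $P(\Gamma)$ (a finite, routine verification), it immediately induces a combinatorial automorphism moving $\chi(\es)$. Your inference that, because the affine extension ``fails to be orthogonal,'' one ``cannot appeal to a symmetry of the ambient'' space is a non sequitur: orthogonality is only needed for the isometry statement in Proposition~\ref{propF}; affinity alone suffices for face preservation, and that single observation replaces the entire verification you leave undone.

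The second gap is the link case of (b)$\Rightarrow$(c), which you explicitly do not carry out. It needs no ``more delicate'' machinery; the paper settles it by the same clique transport as the singleton case. Writing $e=\{v,w\}$, your conditions $c_{vw}=0$ and $d_e=0$ give, via Proposition~\ref{prop1}(iv): $N(v)\cap N(w)=\es$; the nodes linked to neither $v$ nor $w$ form a stable set; and no node of $\{v\}\cup N(v)$ is linked to a node linked to neither $v$ nor $w$. Hence $\NNN(\chi(e))$ is the union of the two cliques $\CCC=\{\chi(\{v\})\}\cup\{\chi(\{v,u\})\st u\in N(v)\setminus\{w\}\}\cup\{\chi(\{u\})\st u\not\sim v,\ u\not\sim w\}$ and $\DDD=\{\chi(\{w\})\}\cup\{\chi(\{w,u\})\st u\in N(w)\setminus\{v\}\}$, with $\chi(\{v\})$ adjacent to no vertex of $\DDD$; transporting this configuration through an isomorphism $\NNN(\chi(e))\cong\bar\Gamma$ produces a bipartition of $V$ into two stable sets in which some node of one part is linked to every node of the other, contradicting Condition~[*] exactly as in your singleton case. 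With these two repairs your proof coincides with the paper's.
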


\begin{proof}
(a)~$\implies$~(b). Proceeding by contraposition, we assume that the graph $\Gamma$ does not satisfy [*].  Suppose first that $\Gamma$ has no link.  Then the polytope $P(\Gamma)$ is a simplex with at least two vertices, and so (a) is not valid.  Next, assume $\Gamma$ has a bipartition $C$, $D$ and a node $v_0$ as in [*].
Consider the affine permutation $T$ mapping the point $x$ of $\R^V$ to the point $x'$ of $\R^V$ with, for $u$ in $V$,
$$
x'_u \;=\;
\begin{cases}
1  - \sum_{a \in C} x_a & \text{if } u=v_0,\\
x_u & \text{if } u \neq v_0.
\end{cases}
$$
It is easily checked that $T$ permutes the vertices of the polytope $P(\Gamma)$, more precisely: $T$ exchanges $\chi(\es)$ and $\chi(\{v_0\})$ and, for any node $b$ in $D$, it exchanges $\chi(\{b\})$ with $\chi(\{v_0,b\})$; also, $T$ fixes all other vertices of $P(\Gamma)$ (even those coming from potential links between $C$ and $D$ that do not contain $v_0$).  Thus $T$ induces on the set of vertices a combinatorial automorphism which maps the vertex $\chi(\es)$ on some other vertex, so (a) is not valid.

\smallskip

(b)~$\implies$~(c). By Corollary~\ref{cor1}(ii), $\NNN(\chi(\es))$ is isomorphic to $\bar\Gamma$.  Proceeding again by contradiction, assume some vertex $p$ of the polytope $P(\Gamma)$, different from $\chi(\es)$, is such that the graph induced by the skeleton on its neighbourhood is isomorphic to the graph $\bar\Gamma$.  In particular, the number of neighbours of $p$ must be $|V|$.

If $p=\chi(\{v\})$, for some $v$ in $V$, we know from Proposition~\ref{prop1} that $\chi(\{v\})$ is for sure adjacent to $\chi(\es)$, to $\chi(\{u\})$ when $v \not\sim u$ and to $\chi(\{v,w\})$ when $v \sim w$ (a total number of $|V|$ vertices).  Hence $\chi(\{v\})$ cannot be adjacent to any further vertex.  In particular, there is no link between any two nodes of $\Gamma$ not linked to $v$.  Then the neighborhood of $\chi(\{v\})$ is the union of two cliques, respectively the clique $\CCC$ formed by the vertex $\chi(\es)$ together with the vertices $\chi(\{u\})$ for $v \not\sim u$, and the clique $\DDD$ formed by the vertices $\chi(\{v,w\})$ for $v \sim w$.  Moreover, $\chi(\es)$ is adjacent to no vertex in $\DDD$.  Because of the present assumption on $p=\chi(\{v\})$, there exists a bipartition $C$, $D$ of $V$ as in Condition~[*] (with $v_0=v$), and so (b) does not hold.

If $p=\chi(\{v,w\})$, for some link $\{v,w\}$ in $E$, then $\chi(\{v,w\})$ is adjacent to $\chi(\{v\})$, to $\chi(\{w\})$, to all $\chi(\{v,u\})$ for $v \sim u$, to all $\chi(\{w,u\})$ for $w \sim u$ and to all $\chi(\{u\})$ for $v \not\sim u$ and $w \not\sim u$, and to no more vertex.  Because $\chi(\{v,w\})$ must be adjacent to $|V|$ vertices, the neighborhoods $N(v)$ of $v$ and $N(w)$ of $w$ in $\Gamma$ must be disjoint and the nodes linked to neither $v$ nor $w$ must form a stable set; also, a node in $\{v\}\cup N(v)$ is never linked to a node linked to neither $v$ nor $w$.  Two cliques appear: the clique $\CCC$ formed by the vertex $\chi(\{v\})$, the vertices $\chi(\{v,u\})$ for $u \in N(v)$ and the vertices $\chi(\{u\})$ for $u$ linked to neither $v$ nor $w$, and the clique $\DDD$ formed by the vertices $\chi(\{w\})$ and $\chi(\{w,u\})$ for $u \in N(w)$.  Moreover, $\chi(\{v\})$ is adjacent to no vertex in $\DDD$.  Because of the present assumption on $p=\chi(\{v,w\})$, there exists a bipartition $C$, $D$ of $V$ as in Condition~[*] (with $v_0=v$).

\smallskip

(c)~$\implies$~(d) The implication is trivial.
% in view of (ii) in Corollary~\ref{cor1}.

\smallskip

(d)~$\implies$~(a).  Any combinatorial automorphism of the polytope $P(\Gamma)$ is also an automorphism of the skeleton of $P(\Gamma)$.
\qed\end{proof}

To any automorphism $\alpha$ of the graph $\Gamma$, we now associate a combinatorial automorphism $F(\alpha)$ of the polytope $P(\Gamma)$ which fixes the vertex $\chi(\es)$.  Because $\alpha$ permutes the nodes $u$ of $\Gamma$, it induces a permutation $\alpha'$ of the characteristic vectors $\chi(\{u\})$.  As the latter vectors form the canonical  basis of $\R^V$, there is a unique linear permutation $\alpha''$ of $\R^V$ which extends $\alpha'$, and of course $\alpha''$ fixes $\chi(\es)$, the origin.  Because the canonical basis is orthonormal, $\alpha''$ is an isometry.  Moreover, for any link $\{v,w\}$ of the graph $\Gamma$, we see that $\alpha''$ maps the vertex $\chi(\{v,w\})$ of $P(\Gamma)$ onto the vertex $\chi(\{\alpha(v),\alpha(w)\})$ (because $\chi(\{v,w\})=\chi(\{v\}) + \chi(\{w\})$ and $\alpha''$ is linear).  Thus the linear permutation $\alpha''$ stabilizes the set of vertices of the polytope $P(\Gamma)$, and so $\alpha''$ induces a combinatorial automorphism of the polytope which fixes $\chi(\es)$; we denote this combinatorial automorphism by $F(\alpha)$.  

\begin{proposition}\label{propF}\sl
The mapping $F$ defined just above is an injective homomorphism from the group $A$ of automorphisms of the graph $\Gamma$ into the group $B$ of combinatorial automorphisms of the polytope $P(\Gamma)$.
When the graph $\Gamma$ satisfies Condition~[*], the mapping $F$ is a group isomorphism from $A$ to $B$, and moreover any combinatorial automorphism of $P(\Gamma)$ is induced by some isometry of $P(\Gamma)$.
\end{proposition}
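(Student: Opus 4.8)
The plan is to split the statement into its always-true part (for every graph $\Gamma$) and the part that needs Condition~[*]. For the always-true part I would argue directly from the construction of $F$: since the linear extension of a composite permutation of the canonical basis is the composite of the linear extensions, $F(\alpha_1 \circ \alpha_2) = F(\alpha_1) \circ F(\alpha_2)$, so $F$ is a homomorphism; and if $F(\alpha)$ acts as the identity on the vertex set then in particular it fixes each $\chi(\{u\})$, whence $\alpha(u)=u$ for all $u$ and $\alpha=\mathrm{id}$, so $F$ is injective. Neither argument uses [*].

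The substance is the surjectivity of $F$ when $\Gamma$ satisfies [*]. First I would take an arbitrary combinatorial automorphism $\phi$ of $P(\Gamma)$ and recover a graph automorphism $\alpha$ with $\phi=F(\alpha)$. By Lemma~\ref{lemma} (implication (b)$\implies$(a)), Condition~[*] forces $\phi$ to fix $\chi(\es)$; hence $\phi$ stabilises the neighbourhood $\NNN(\chi(\es))$ and, preserving adjacency, acts on it as an automorphism of the induced graph. By Corollary~\ref{cor1}(ii) that induced graph is isomorphic to $\bar\Gamma$ via $u \mapsto \chi(\{u\})$, so $\phi$ determines a permutation $\alpha$ of $V$ which is an automorphism of $\bar\Gamma$, and therefore of $\Gamma$. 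Since $F(\alpha)$ agrees with $\phi$ on $\chi(\es)$ and on every $\chi(\{u\})$ by construction, it then suffices to show that $\psi = F(\alpha)^{-1}\circ\phi$, a combinatorial automorphism fixing $\chi(\es)$ and each $\chi(\{u\})$, is the identity.

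The hard part, and the only genuinely delicate step, is to show that such a $\psi$ also fixes every vertex $\chi(e)$ with $e=\{v,w\}\in E$. Rather than trying to transport midpoint relations (which combinatorial automorphisms need not respect), I would use the face lattice. The four vertices $\chi(\es), \chi(\{v\}), \chi(\{w\}), \chi(\{v,w\})$ are exactly the vertices of $P(\Gamma)$ meeting the valid inequality $\sum_{i \in V\setminus\{v,w\}} x_i \ge 0$ with equality, so they form a two-dimensional face (a quadrilateral) of $P(\Gamma)$. Because $\chi(\es), \chi(\{v\}), \chi(\{w\})$ are affinely independent, they lie in a \emph{unique} two-face: two distinct two-faces intersect in a face of dimension at most $1$, which is too small to contain three affinely independent points. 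The automorphism $\psi$ maps two-faces to two-faces and fixes these three vertices, so it must stabilise this quadrilateral; fixing three of its four vertices, it fixes the fourth, namely $\chi(\{v,w\})$. As $e$ was arbitrary, $\psi$ fixes all vertices, $\psi=\mathrm{id}$, and thus $\phi=F(\alpha)$, proving surjectivity.

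Finally, combining the homomorphism property, injectivity and this surjectivity yields that $F$ is an isomorphism from $A$ onto $B$ under Condition~[*]. Moreover the identity $\phi=F(\alpha)$ exhibits every combinatorial automorphism of $P(\Gamma)$ as the restriction of the linear map $\alpha''$ associated with $\alpha$; since $\alpha''$ merely permutes the coordinates of $\R^V$ according to $\alpha$, it is an isometry stabilising $P(\Gamma)$, which settles the last clause and simultaneously shows that all isometries of $P(\Gamma)$ arising here are coordinate permutations.
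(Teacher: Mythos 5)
Your proposal is correct and follows essentially the same route as the paper: homomorphism and injectivity straight from the construction of $F$, surjectivity via Lemma~\ref{lemma} and Corollary~\ref{cor1}(ii) to recover $\alpha$, and then the same key observation that $\chi(\{v,w\})$ is the unique fourth vertex of the 2-dimensional face cut out by $\sum_{i \in V\setminus\{v,w\}} x_i \ge 0$, so that a combinatorial automorphism fixing $\chi(\es)$ and all $\chi(\{u\})$ is determined on the vertices $\chi(e)$. Your explicit justification of the uniqueness of that 2-face (three affinely independent points cannot lie in two distinct 2-faces) merely spells out what the paper leaves implicit.
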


\begin{proof}
By its construction, the mapping $F: A \to B$ is an injective homomorphism of groups.  To show that $F$ is surjective when $\Gamma$ satisfies Condition~[*], consider any combinatorial automorphism $\beta$ of the polytope $P(\Gamma)$.  By Lemma~\ref{lemma}, $\beta$ fixes $\chi(\es)$.  Hence $\beta$ stabilizes the neighborhood of $\chi(\es)$, and induces an automorphism $\beta^*$ of the graph induced on the neighborhood.  As the latter graph is isomorphic to the complement graph $\bar\Gamma$ (Corollary~\ref{cor1}(ii)), $\beta^*=\alpha'$ for some automorphism $\alpha$ of the graph $\Gamma$ (the notation $\alpha'$ was introduced just before the statement we are now proving).  We show $\beta=F(\alpha)$.  First, $\beta$ and $F(\alpha)$ are combinatorial automorphisms fixing $\chi(\es)$ which have the same action on the vertices $\chi(\{u\})$ for $u \in V$.  Second, for any combinatorial automorphism $\gamma$ fixing $\chi(\es)$, the action of $\gamma$ on the vertices of the form $\chi(e)$, for $e\in E$, is determined by the action of $\gamma$ on the vertices $\chi(\{u\})$, for $u\in V$.  Indeed, if $e=\{v,w\}$, then $\chi(e)$ is the only remaining vertex of the 2-dimensional face containing the vertices $\chi(\es)$, 
$\chi(\{v\})$ and $\chi(\{w\})$ (the latter face is defined by the affine inequality $\sum_{u\in V\setminus\{v,w\}} x_u \ge 0$).
Finally, by its definition, $F(\alpha)$ is always the restriction to the set of vertices of an isometry of $P(\Gamma)$.
\qed\end{proof}

The before last argument in the proof shows in fact that any permutation of the set of vertices of $P(\Gamma)$ which fixes the vertex $\chi(\es)$ and maps the vertices of any 2-dimensional face to the vertices of some 2-dimensional face is a combinatorial automorphism.  It would be nice to characterize the graphs $\Gamma$ for which any automorphism of the skeleton of the polytope $P(\Gamma)$ necessarily is a combinatorial automorphism of $P(\Gamma)$.  Proposition~\ref{prop_auto_exc} below provides partial answers.

\begin{example}\label{ex_6cycle}\rm
Let the graph $\Gamma$ be a six-cycle.  Then there is a non-identical automorphism $\rho$ of the skeleton of $P(\Gamma)$ which fixes $\chi(\es)$ and is not a combinatorial automorphim of $P(\Gamma)$.  To obtain such an automorphism, let $\rho$ fix all the vertices $\chi(\{u\})$ for $u \in V$, and map the vertex $\chi(e)$ for any link $e$ in $E$ to the vertex $\chi(f)$, with $f$ the link opposite to $e$ in the six-cycle.  
\end{example}

No other graph $\Gamma$ which is a cycle leads to a special automorphism of the skeleton as in Example~\ref{ex_6cycle}: the assertion follows from the next proposition. 
The \textsl{neighborhood} of a link $\{u,w\}$ of the graph $\Gamma$ is the set
$$
N(\{u,w\}) \;=\; \{ v\in V \setminus \{u,w\} \st v \sim u \text{ or } v \sim w\}.
$$
By Proposition~\ref{prop1}(iii), for $v\in V$ and $e\in E$, we have $v\in N(e)$ exactly when the vertices $\chi(v)$ and $\chi(e)$ of $P(\Gamma)$ are not adjacent.

\begin{proposition}\label{prop_auto_exc}\sl
If there exists some automorphism of the skeleton of $P(\Gamma)$ which fixes $\chi(\es)$ and is not a combinatorial automorphim of $P(\Gamma)$, then the graph $\Gamma$ contains two links $e$ and $f$ satisfying
\begin{enumerate}[\qquad\rm(i)]\itemsep0.3em 
\item $e$ and $f$ are disjoint, and there is no link from any node of $e$ to any node of $f$;
\item $N(e) = N(f)$.
\end{enumerate} 
When the graph $\Gamma$ does not contain any four-cycle, the converse also holds.
\end{proposition}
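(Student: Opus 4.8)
The plan is to prove the two implications separately, in each case first \emph{normalizing} the skeleton automorphism so that it fixes every single-node vertex $\chi(\{u\})$, and then reading the adjacency constraints off Proposition~\ref{prop1}. For the forward direction, let $\sigma$ be an automorphism of the skeleton that fixes $\chi(\es)$ but is not combinatorial. Since $\sigma$ fixes $\chi(\es)$ it stabilizes the neighbourhood $\NNN(\chi(\es))$ and acts there as an automorphism of the induced graph, which by Corollary~\ref{cor1}(ii) is $\bar\Gamma$; as $\mathrm{Aut}(\bar\Gamma)=\mathrm{Aut}(\Gamma)$, this restricted action agrees with that of $F(\alpha)$ for some $\alpha\in\mathrm{Aut}(\Gamma)$. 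Replacing $\sigma$ by $F(\alpha)^{-1}\circ\sigma$ — still a skeleton automorphism, still non-combinatorial because $F(\alpha)$ is combinatorial — I may assume $\sigma$ fixes $\chi(\es)$ and every $\chi(\{u\})$. The argument in the proof of Proposition~\ref{propF} shows that such an automorphism, if combinatorial, also fixes every $\chi(e)$ and is therefore the identity; so my normalized $\sigma$ is non-trivial and sends some $\chi(e)$ to some $\chi(f)$ with $e\neq f$.

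It then remains to extract Properties (i)--(ii) for this pair. Because $\sigma$ fixes every $\chi(\{v\})$ while $\sigma(\chi(e))=\chi(f)$, and $\sigma$ preserves adjacency, the vertex $\chi(\{v\})$ is adjacent to $\chi(e)$ if and only if it is adjacent to $\chi(f)$, for every $v$. By the observation recorded just before the statement ($v\in N(g)$ exactly when $\chi(\{v\})$ and $\chi(g)$ are non-adjacent), this says precisely $N(e)=N(f)$, which is (ii). Property (i) follows from (ii): were $e,f$ to share a node, the other endpoint of $e$ would lie in $N(f)\setminus N(e)$; and once $e,f$ are disjoint, a link joining a node of $e$ to a node of $f$ would likewise place that node of $e$ in $N(f)\setminus N(e)$ — each contradicting $N(e)=N(f)$.

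For the converse I assume $\Gamma$ has no four-cycle and that $e,f$ satisfy (i)--(ii); I take $\rho$ to be the transposition of $\chi(e)$ and $\chi(f)$ that fixes every other vertex. Since $\rho$ fixes $\chi(\es)$ and every $\chi(\{u\})$ but is not the identity, the same fact from Proposition~\ref{propF} shows $\rho$ cannot be combinatorial, so everything reduces to verifying that $\rho$ preserves adjacency, i.e.\ that $\chi(e)$ and $\chi(f)$ have the same neighbours among the remaining vertices. Against $\chi(\es)$ this is immediate, as neither link-vertex is adjacent to it (Proposition~\ref{prop1}(i)); against each $\chi(\{v\})$ it holds by $N(e)=N(f)$ exactly as above; and against each $\chi(g)$ with $g\in E\setminus\{e,f\}$, the absence of four-cycles collapses Proposition~\ref{prop1}(iv) to the statement that any two distinct links are adjacent, so $\chi(g)$ is adjacent to both $\chi(e)$ and $\chi(f)$. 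Hence $\rho$ is the desired non-combinatorial skeleton automorphism fixing $\chi(\es)$.

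The one genuinely delicate step is the normalization in the forward direction: recognizing that the restriction of an arbitrary $\chi(\es)$-fixing skeleton automorphism to $\NNN(\chi(\es))$ comes from a graph automorphism and can be divided out by the corresponding $F(\alpha)$, so that all of the non-combinatorial behaviour becomes concentrated on the link-vertices. Once that reduction is in place, both Properties (i)--(ii) and the converse construction are routine consequences of the adjacency rules of Proposition~\ref{prop1}, with the no-four-cycle hypothesis entering only to trivialize link-to-link adjacency.
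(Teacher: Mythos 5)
Your proof is correct and follows essentially the same route as the paper: normalize the skeleton automorphism by composing with $F(\alpha)^{-1}$ so that it fixes $\chi(\es)$ and every $\chi(\{u\})$, deduce that it must move some link-vertex $\chi(e)$ to $\chi(f)$, read off $N(e)=N(f)$ (and hence disjointness) from Proposition~\ref{prop1}(iii), and for the converse use the transposition of $\chi(e)$ and $\chi(f)$, with the no-four-cycle hypothesis making all link-vertices pairwise adjacent. The only cosmetic difference is that you derive Property~(i) as a consequence of Property~(ii), while the paper extracts both directly from Proposition~\ref{prop1}(iii); the content is the same.
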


\begin{proof}
Let $\gamma$ be some automorphism of the skeleton of $P(\Gamma)$ which fixes $\chi(\es)$ and is not a combinatorial automorphim of $P(\Gamma)$.  Thus $\gamma$ stabilizes the set $\NNN(\chi(\es))$ of neighbours of $\chi(\es)$, and $\gamma$ induces an automorphism of the graph induced by the skeleton on $\NNN(\chi(\es))$.  Because the induced graph is isomorphic to the complement of $\Gamma$ (Corollary~\ref{cor1}), there exists some automorphism $\alpha$ of $\Gamma$ such that the combinatorial automorphism $F(\alpha)$ and $\gamma$ acts the same way on $\NNN(\chi(\es))$; they moreover fix $\chi(\es)$ (by our assumption on $\gamma$ and the definition of $F(\alpha)$). Setting $\beta$ equal to the product $\big(F(\alpha)\big)^{-1} \circ \gamma$, we get an automorphism of the skeleton which fixes $\chi(\es)$ and each vertex in $\NNN(\chi(\es))$.  
As $\gamma$ is not a combinatorial automorphism, $\beta$ differs from the identity.  Hence $\beta$ must send some vertex $\chi(e)$ to some vertex $\chi(f)$, where $e$ and $f$ are distinct links in $\Gamma$. 
Consequently, $\chi(e)$ and $\chi(f)$ are adjacent to the same vertices in $\NNN(\chi(\es))$.  By Proposition~\ref{prop1}(iii), $e$ and $f$ satisfy (i) and (ii) in the statement.

Conversely, when $\Gamma$ does not contain any four-cycle, the graph induced by the skeleton on $\{\chi(d) \st d\in E\}$ is complete.  Then the permutation which exchanges $\chi(e)$ and $\chi(f)$ (for $e$ and $f$ as in the statement), while leaving all other vertices of $P(\Gamma)$ fixed, is an automorphism of the skeleton; it is not a combinatorial automorphism (because it fixes $\chi(\es)$ and all $\chi(\{u\})$ for $u \in V$).
\qed\end{proof}

If some graph $\Gamma$ contains two links $e$ and $f$ as in Proposition~\ref{prop_auto_exc}, it is easy to build an \textsl{augmented} graph $\Gamma'$ having the same automorphism group as $\Gamma$ but containing no such pair of links: it suffices to add for any node $v$ of $\Gamma$ two new nodes $v'$ and $v''$ with two new links $\{v,v'\}$ and $\{v',v''\}$ (in case two nodes of $\Gamma$ have the same neighborhood only if they are unlinked, it even suffices to add for each node $v$ only one new node $v'$ and one new link $\{v,v'\}$).  

%%%%%%%%%%%%%%%%%%%%%%%%%%%%%%%%%%%%%%%%%%%%%%%%%%%%%%%%%%%%%%%%%%%%%%
\section{Building a Binary Polytope for any Group}
\label{sect_Polytope_from_Group}

Given any finite group $G$, we now build a convex polytope $P_G$ as in Theorem~\ref{theorem}---however, we first leave Property~(v) out of our discussion.  As recalled in Section~\ref{sect_Graph_from_Group}, there exists a graph $\Gamma=\Gamma(G)$ whose automorphism group is $G$.  
If $\Gamma$ does not satisfy Condition~[*] of Section~3,  
we replace the graph $\Gamma$ with its complement but keep the notation $\Gamma$ for the resulting graph.  Notice that now the graph $\Gamma$ satisfies Condition~[*], except for the graph having only one node, and for three particular pairs of complementary graphs on $2$, $3$ or $4$ nodes respectively---each time, a path and its complement.  In the first case, we take for our convex polytope $P_G$ the polytope having only one vertex (at the origin); in each of the three other particular cases, the automorphism group of the graph(s) is the cyclic group of order $2$ and we take for our polytope $P_G$ the segment in $\R^1$  with endpoints $0$ and $1$.  When the graph $\Gamma$ satisfies Condition~[*], we may apply Corollaries~\ref{cor1}, \ref{prop2} and Proposition~\ref{propF}: the polytope $P_G=P(\Gamma(G))$ satisfies Properties~(i)--(iv) in Theorem~\ref{theorem}.

Let us discuss how large the resulting polytope $P_G=P(\Gamma(G))$ is with respect to the order $n$ of the group $G$.  First, recall that for any graph $\Gamma$ the dimension of $P(\Gamma)$ equals the number of nodes in $\Gamma$ (Corollary~\ref{prop2}).  We saw in Section~\ref{sect_Graph_from_Group} that there is a graph $\Gamma(G)$ with at most $2\,n$ nodes, except for the three exceptional groups $C_3$, $C_4$ and $C_5$.  
So apart from three exceptional groups, a careful choice of the graph $\Gamma$ delivers a polytope $P_G=P(\Gamma(G)$ of dimension at most $2\,n$.  
For $C_3$, there is a graph with $9$ nodes, for $C_4$ with 10 nodes, for $C_5$ with 15 nodes such that all three graphs satisfy Condition~[*].  It might be that another construction delivers polytopes with the desired properties and dimension less that 9, 10 and 15 respectively.
Furthermore, in view of the result of \cite{Babai_Godsil_1982} recalled in Section~\ref{sect_Graph_from_Group}, we can even build the  polytope $P_G$ in a space of dimension at most $n$ when $G$ does not  belong to the list of exceptional groups given in Section~\ref{sect_Graph_from_Group}.

What about the number of vertices of $P_G=P(\Gamma(G))$?   Assuming that the dimension of the polytope has been minimized, we might want to decrease the number of vertices as much as possible.  
In other words, having minimized the number of nodes in the graph $\Gamma$ with automorphism group $G$, we next want to minimize the number of links.  Characterizations of such minimal graphs exist for certain groups, see for instance \cite{McCarthy_Quintas_1979}.
If we agree to increase the dimension of the polytope $P_G$ to (at most) $2\,n+1$, where again $n=|G|$, we may use a result of  \cite{Babai_1981}: if $g$ is the minimum number of generators of $G$ and $G$ is neither $C_3$, $C_4$ or $C_5$, there is a graph $\Gamma$ with automorphism group $G$ and at most $2\,n+1$ nodes and $2n(g+1)$ links (because $\Gamma$ contains three-cycles, it satisfies Condition~[*]).  Our construction then produces a polytope of dimension at most $2\,n+1$ and with at most $2\,n(g+2)+2$ vertices.

We now establish Theorem~\ref{theorem}, this time taking into account Property~(v) on the automorphism group of the skeleton.  Assume the given group $G$ is of order at least $3$. To prove the full theorem, we again start by taking a graph $\Gamma$ whose automorphism group is $G$.  Next, as we did at the end of Section~\ref{sect_Polytope_from_Graph}, we build the augmented graph $\Gamma'$.  As $\Gamma'$ always satisfies Condition~[*] and has no pair of links $e$, $f$ as in Proposition~\ref{prop_auto_exc}, the polytope $P(\Gamma')$ satisfies all properties in Theorem~\ref{theorem} (as seen from  Corollaries~\ref{cor1}, \ref{prop2}, Lemma~\ref{lemma} and Propositions~\ref{propF}, \ref{prop_auto_exc}).

As announced in the introduction, we now deduce that for any finite group $G$ there is a directed graph whose asymmetric traveling-salesman polytope has its automorphism group isomorphic to $G$.  This follows at once from Theorem~\ref{theorem} combined with a nice result of \cite{Billera_Sarangarajan_1996}, which states that any binary, convex polytope is affinely isomorphic to the asymmetric traveling-salesman polytope of some directed graph.  

%%%%%%%%%%%%%%%%%%%%%%%%%%%%%%%%%%%%%%%%%%%%%%%%%%%%%%%%%%%%%%%%%%%%%%
\begin{acknowledgements}
We would like to thank Selim Rexhep, a past Ph.D.\ student, for his many interesting questions and in particular the one asking for the existence of a convex polytope for any finite group.  We thank also two anonymous reviewers for their useful comments on a preliminary version.
\end{acknowledgements} 

%%%%%%%%%%%%%%%%%%%%%%%%%%%%%%%%%%%%%%%%%%%%%%%%%%%%%%%%%%%%%%%%%%%%%%

\end{document}